\newtheorem{theorem}{Theorem}[section]
\newtheorem{lemma}{Lemma}[section]
\newtheorem{definition}{Definition}[section]
\newtheorem{example}{Example}[section]
\newenvironment{remark}{\addtocounter{theorem}{1}\vskip 0.2cm{\sc Remark
\thetheorem.}}{\hfill\vskip 0.2cm}
\newcommand{\sign}{\operatornamewithlimits{sign}}
\newcommand{\var}{\operatornamewithlimits{\bf var}}
\newcommand{\beq}{\begin{equation}}
\newcommand{\enq}{\end{equation}}
\newcommand{\beqa}{\begin{eqnarray}}
\newcommand{\enqa}{\end{eqnarray}}
\newcommand{\beqas}{\begin{eqnarray*}}
\newcommand{\enqas}{\end{eqnarray*}}
\newcommand{\R}{{\mathbb{R}}}
\newcommand{\F}{{\mathcal{F}}}
\newcommand{\E}{ {\bf E}}
\newcommand{\Prob}{ {\bf P}}
\numberwithin{equation}{section}
\providecommand{\keywords}[1]{\textbf{{Keywords:}} #1}
\providecommand{\subclass}[1]{\textbf{{Mathematics Subject Classification 2000:}} #1}
\begin{document}
\title{Utility  maximization in Wiener-transformable markets}

\author{E. Boguslavskaya\footnote{
              Department of  Mathematics, Brunel University London, Uxbridge UB8 3PH, UK ,
             {elena@boguslavsky.net}}\,\,\thanks{Elena Boguslavskaya is supported by Daphne Jackson fellowship funded by EPSRC}\,\,\,
             \and
        Yu. Mishura\footnote{Department of Probability Theory, Statistics and Actuarial Mathematics,
 Taras Shevchenko National University of Kyiv,
64, Volodymyrs'ka St.,
 01601 Kyiv, Ukraine,           {myus@univ.kiev.ua}}}
 \date{}
 \maketitle
\abstract{
We consider a utility maximization problem in a broad class of markets. Apart from  traditional semimartingale markets, our class of markets includes processes with long memory, fractional Brownian motion and related processes, and, in general,
  Gaussian processes   satisfying certain regularity conditions on their covariance functions. Our choice of  markets is motivated by  the well-known phenomena of  the so-called ``constant'' and ``variable depth''  memory observed in real world price processes,  for which fractional and multifractional models are the most adequate descriptions, see, e.g., \cite{bian,Vehel}.
  We introduce the notion of a Wiener-transformable  Gaussian process, and give examples of such processes, and their representations.
The representation for
     the solution of the utility maximization problem in our specific setting is presented for various utility functions.
}
\\
\\
\keywords{Utility maximization problem; processes with long memory;  fractional Brownian motion, Wiener-transformable processes;  pricing measure; martingale and Clark-Ocone representations; pathwise integrals}
\\
\\
 \subclass{91B25;  91B16;   60G22}
\tableofcontents

\section{Introduction}
Many papers and books are written on utility maximization in semimartingale financial markets. Here we mention only the paper \cite{Bia-Fritt} with extended references therein, the book \cite{Foll-Sch}, where the general setting is described in very simple terms, and one of the most recent papers, \cite{cara-raso}, where an optimal investment problem is studied for a behavioral investor in an incomplete
discrete-time multiperiod financial model.

In the present  paper  we consider a utility maximization problem for a broader class of asset prices processes. We assume the asset prices to follow Gaussian processes subject to certain regularity conditions on their covariance functions. This class of processes includes processes with long memory, fractional Brownian motion and related processes. Numerous examples of such processes are provided.

To construct the capital process from a Gaussian price process $G$, we consider a strategy $\psi=\{\psi(t), t\in[0,T]\}$ for which the integral $\int_0^T\psi(t)dG(t)$ exists as a pathwise integral.  The capital with respect to the strategy $\psi$  at time $T$ is given by $d + \int_0^T\psi(t)dG(t),$ where the initial capital $d$ can be any real number.  In our opinion this is the simplest way to construct the capital in such a general setting.

The market can admit arbitrage, and moreover, without any additional restrictions, starting from any fixed initial value, we can acquire an arbitrary value of the final capital on such a market. However, even in such conditions, under some reasonable additional restrictions, the problem of utility maximization makes sense. The restriction involves the Radon-Nikodym derivative of the pricing measure.  In the standard Black-Scholes setting the pricing measure coincides with the unique martingale measure.

We are assuming that the Gaussian process $G$ generates the same filtration as a certain Wiener process, and combine the following three  facts in our approach.
\begin{itemize}
\item 
 Firstly, the Radon-Nikodym derivative $\varphi(T)$ of the pricing measure can be presented as the final value of some positive martingale, and under certain regularity assumptions this martingale is a process with H\"{o}lder trajectories.
\item Secondly, the solution of the utility maximization problem can be presented as a smooth function of the  pricing Radon-Nikodym derivative $\varphi(T)$, and, consequently, is  the final value of a H\"{o}lder process. The corresponding theorems are proved, e.g., in \cite{Foll-Sch}.
\item Thirdly, the final value of some H\"{o}lder process can be presented as  a pathwise integral $\int_0^T\psi(t)dG(t)$, or as $d+\int_0^T\psi(t)dG(t)$ for any constant $d\in\R$. It means that we can achieve the desirable maximal capital starting from any point. We provide a construction for the appropriate strategy.
\end{itemize}    
    Therefore, in some sense,   the main purpose of this paper is  to draw attention to the fact that in general markets without transaction costs, a utility maximization problem makes sense with a non-standard additional restriction.
    
    Markets with transaction costs as well as the reduction of  the solution of the utility maximization problem in our class of markets  to the corresponding partial differential equation will be the subjects of our future papers. Note that the strategy can be non-unique, thus one may hope that the  construction of the strategy proposed in the paper can be simplified.
    
    The paper is organized as follows. Section \ref{sec:2} contains the elements of fractional and Malliavin calculus and provides  the martingale and Clark-Ocone representations. In Section \ref{sec:3} we give the notion, examples   and representations  of Wiener-transformable  Gaussian processes that are used as the underlying price processes in our financial markets. These processes include a broad class of non-standard price processes. We formulate and comment on the representation result for  pathwise integrals w.r.t. such processes from \cite{mish-shev}, and prove an auxiliary result concerning H\"{o}lder properties of  stochastic It\^{o} integrals and their quadratic characteristics. Section \ref{sec:4} contains the solution of the utility maximization problem for the unrestricted capital under exponential utility. We use results from previous sections and from \cite{Foll-Sch} to demonstrate that the utility maximisation problem is well-posed. The uniqueness of the solution follows from convexity properties.  Some recommendations concerning the choice of $\varphi(T)$ are presented. Section \ref{sec:5} contains similar results for the restricted payoffs. Section \ref{sec:6} concludes with the description of an optimal strategy.

\section{Elements of fractional and Malliavin calculus. Martingale and Clark-Ocone representation}\label{sec:2}
We start with some preliminary definitions and representations.
\subsection{Elements of fractional calculus and fractional integration}
Here we present the basic facts on fractional integration; for more details see \cite{samko,zahle}. Consider functions  $f,g:[0,T]\rightarrow \mathbb{R}$, and let  $[a,b]\subset [0,T]$.
For $\alpha\in (0,1)$ define Riemann-Liouville fractional derivatives on finite interval $[a,b]$
\begin{gather*}
\big(\mathcal{D}_{a+}^{\alpha}f\big)(x)=\frac{1}{\Gamma(1-\alpha)}\bigg(\frac{f(x)}{(x-a)^\alpha}+\alpha
\int_{a}^x\frac{f(x)-f(u)}{(x-u)^{1+\alpha}}du\bigg)1_{(a,b)}(x),\end{gather*}
\begin{gather}\label{equ:dif}\big(\mathcal{D}_{b-}^{ \alpha}g\big)(x)=\frac{1} {\Gamma(1-\alpha)}\bigg(\frac{g(x)}{(b-x)^{ \alpha}}+ \alpha
\int_{x}^b\frac{g(x)-g(u)}{(u-x)^{1+\alpha}}du\bigg)1_{(a,b)}(x).
\end{gather}
Assuming that
 $\mathcal{D}_{a+}^{\alpha}f\in L_1[a,b]$, $\mathcal{D}_{b-}^{1-\alpha}g_{b-}\in
L_\infty[a,b]$, where $g_{b-}(x) = g(x) - g(b)$,
the generalized Lebesgue--Stieltjes
integral
is defined as
\begin{equation*}\int_a^bf(x)dg(x)= \int_a^b\big(\mathcal{D}_{a+}^{\alpha}f\big)(x)
\big(\mathcal{D}_{b-}^{1-\alpha}g_{b-}\big)(x)dx.
\end{equation*}

Let  function $g$ have H\"{o}lder trajectories, namely, $g\in C^\theta[a,b]$ with $\theta\in(\frac12,1)$. In order to integrate w.r.t. function  $g$ and to find an upper bound of the integral, fix some $\alpha \in(1-\theta,1/2)$ and introduce the following norm:
\begin{gather*}
\|f\|_{\alpha,[a,b]} = \int_a^b \left(\frac{|{f(s)}|}{(s-a)^\alpha} + \int_a^s \frac{|{f(s)-f(z)}|}{(s-z)^{1+\alpha}}dz\right)ds.
\end{gather*}
For simplicity we abbreviate $\|\cdot\|_{\alpha,t} = \|\cdot\|_{\alpha,[0,t]}$. Denote $$\Lambda_\alpha(g):= \sup_{0\le s<t\le T} |{\mathcal{D}_{t-}^{1-\alpha}g_{t-}}(s)|.$$ In view of H\"{o}lder continuity, $\Lambda_\alpha(g)<\infty$.

Then for any $t\in(0,T]$ and for any $f$ with $\|f\|_{\alpha,t}<\infty$, the integral $\int_0^t f(s) dg(s)$ is well defined as a generalized Lebesgue--Stieltjes integral, and the following bound is evident:
\begin{gather}\label{equ:ineq}
\Big|{\int_0^t f(s)dg(s)}\Big|\le \Lambda_\alpha(g) \|f\|_{\alpha,t}.
\end{gather}
It is well known that in the case of  both functions $f$ and  $g$ being  H\"{o}lder, more precisely,  $f\in C^\beta[a,b]$, $g\in C^\theta[a,b]$ with $\beta+\theta>1$, the generalized Lebesgue--Stieltjes
integral $\int_a^bf(x)dg(x)$ exists, equals to the limit of Riemann sums and admits bound \eqref{equ:ineq} for any $\alpha \in(1-\theta, \beta\wedge 1/2)$.
\begin{definition}
Let $\alpha>0.$
 The (right-sided) Riemann--Liouville fractional integral operator of
order $\alpha$ over the real line is defined by
$$
(\mathcal{I}^\alpha_{-}f )(s) :=\frac{1}{\Gamma(\alpha)}\int_s^{\infty} f(u)(u-s)^{\alpha-1}du,\;s\in\R.
$$
Let $T > 0$. The (right-sided) Riemann--Liouville fractional integral operator of order $\alpha$ over
$[0, T ]$ is defined by
$$(\mathcal{I}^\alpha_{T_{-}}f )(s)=(\mathcal{I}^\alpha_{-}f\mathbb{I}_{[0,T]} )(s),\; s>0$$.
\end{definition}
\begin{definition} Let $\alpha\in(0, 1)$. The (right-sided) Riemann--Liouville fractional derivative
operator of order $\alpha$ over the real line is defined by
\begin{align*}
(\mathcal{I}^{-\alpha}_{{-}}f )(s)=(\mathcal{D}^\alpha_{-}f )(s) :&=-\frac{d}{ds}\left(\mathcal{I}^{1-\alpha}_{{-}}f \right)(s)\\
&=-\frac{1}{\Gamma(1-\alpha)}\frac{d}{ds}\int_s^{\infty} f(u)(u-s)^{-\alpha}du,\; s\in\R.
\end{align*}
The (right-sided) Riemann--Liouville fractional derivative operator of order $\alpha$ over $[0, T ]$ is
defined by
\begin{gather}\label{equ:diff1}(\mathcal{I}^{-\alpha}_{T_{-}}f )(s)=(\mathcal{D}^\alpha_{T_{-}}f )(s) :=-\frac{d}{ds}(\mathcal{I}^{1-\alpha}_{T_{-}}f )(s), s > 0.\end{gather}
\end{definition}
 If both operators $(\mathcal{D}^\alpha_{T_{-}}f )$ from \eqref{equ:diff1} and $\big(\mathcal{D}_{T-}^{ \alpha}g\big)$ from \eqref{equ:dif} exist for some $\alpha>0$, they coincide. Equality \eqref{equ:dif}  is a Weyl representation of fractional derivative from \eqref{equ:diff1}.
Now, for $H\in(0, 1)$, define weighted fractional integral operators by
$$(\mathbf{K}^H f )(s) := (\mathbf{K}^H_T f )(s) := C(H)s^{\frac12-H}(\mathcal{I}^{H-\frac12}_{T_{-}}(u^{H-\frac12}f (u)))(s),
 s\in (0, T ),$$
 where $C(H)=\left(\frac{2H\Gamma(H+\frac12)\Gamma(\frac32-H)}{\Gamma(2-2H)}\right)^{\frac12}.$

 Let throughout the paper $(\Omega, \mathcal{F},  \mathbb{P} )$ be   a complete probability space   supporting all stochastic processes mentioned below.
 Introduce the fractional Brownian motion $B^H=\{B^H(t), t\geq 0\}$ with Hurst index $H\in(0,1)$ on $(\Omega, \mathcal{F},  \mathbb{P} )$, that is, a Gaussian process with zero mean and covariance function $$R(t,s)=\frac12(s^{2H}+t^{2H}-|t-s|^{2H}).$$ Then, according to \cite{jost,norros}, there exists an one-dimensional Wiener process $W=\{W(t),t\geq 0\}$ on this probability space such that
 \begin{equation}\label{fbmviawin}B^H(t)=\int_0^t(\mathbf{K}^H_t \mathbb{I}_{[0,t]} )(s)dW(s).\end{equation}
 In turn, Wiener process $W=\{W(t), t\in\R^+\}$ is presented via fractional Brownian motion $B^H$ as
 \begin{equation}\label{winviafbm}
 W(t)=(C(H))^{-1}\int_0^ts^{\frac12-H}(\mathcal{I}_{t-}^{\frac12-H} u^{H-\frac12} )(s)dB^H(s).
 \end{equation}

 \subsection{Martingale representation. Elements of Malliavin calculus   and Clark-Ocone representation} Let $W=\{(W_1(t),\ldots,W_m(t)),t\geq 0\}$ be a $m$-dimensional Wiener process. Denote $\mathbb{F}^W=\{\mathcal{F}_t^W, t\geq 0\}$ the filtration generated by $W$ on $(\Omega, \mathcal{F},  \mathbb{P} )$, and let point $T>0$ be fixed. Denote $(\cdot,\cdot)$ and $\|\cdot\|$ the inner product and the Euclidean norm in $\R^m$, correspondingly.
 Consider $\F_T^W$-measurable random variable $\xi$ with $\E\xi^2<\infty$. According to the well-known martingale representation theorem (see, e.g., \cite{ma}), there exists such $m$-dimensional  and $\mathbb{F}^W$-progressively measurable process $\vartheta=\left\{\vartheta(t), \F_t^W, t\in[0,T]\right\}$ that
  \begin{equation}\label{int1}
 \E\int\limits_{0}^{T}\left\|\vartheta(s)\right\|^2ds<\infty
 \end{equation} and
\begin{equation}\label{xi}
\xi=\E(\xi)+\int\limits_{0}^{T}\left(\vartheta(t),dW(t)\right)=\E(\xi)+\sum\limits_{i=1}^{m}\int\limits_{0}^{T}\vartheta_i(t)dW_i(t).
\end{equation}
 According, e.g., to \cite{Karatzas}, representation \eqref{xi} can be generalized to  random variables $\xi$ with $\E(|\xi|)<\infty$  by  replacing \eqref{int1} with property $\int\limits_{0}^{T}\left\|\vartheta(s)\right\|^2ds<\infty$ a.s..

 Now, for any $k\geq 1$ denote by $\mathcal{C}_b^\infty(\R^k)$ the space of bounded infinitely differentiable functions $f:\R^k\rightarrow\R$ with   bounded derivatives of all orders. Let $ \mathcal{S} $ be the class of smooth functionals, i.e.,   random variables of the form
 $$F(\omega)=f(W(t_1,\omega ),\ldots,W(t_n,\omega )),$$
 where $(t_1,\ldots,t_n)\in [0,T]^n$, and the function $f=f(x_{1,1},\ldots,x_{n,m}): R^{nm}\rightarrow\R$,
  $f\in \mathcal{C}_b^\infty(\R^{nm}).$ The stochastic gradient $DF(\omega)$ of the smooth functional $F$ is defined as the $(L_2([0,T])^m$ - valued random variable $DF=\left(D^1F,\ldots,D^m F\right)$ with components
	$$D^iF(\omega)(t)=\sum\limits_{j=1}^{n}\frac{\partial}{\partial x_{i,j}}f\left(W(t_1,\omega),\ldots,W(t_n,\omega)\right)\mathbb{I}_{[0,t_j]}(t), \;\;\; 1\le i \le m.$$
	For each $p\geq 1$  introduce the norm
	$$\left\|F\right\|_{p,1}=\left(\E\left(|F|^p+\left(\sum\limits_{i=1}^{m}\left\|D^i F\right\|^2_{L_2([0,T])}\right)^{\frac{p}{2}}\right)\right)^{\frac{1}{p}}$$
	on $S$, and denote $D_{p,1}$ the Banach space which is the closure of $S$ under $\left\|\cdot\right\|_{p,1}$. It was proved in \cite{Shigekawa} that $DF$ is correctly defined on $D_{p,1}$ by  closure. Given $F\in D_{p,1}$, one can find a measurable process $D_tF(\omega)$ such that for a.e. $\omega\in\Omega$, and a.a. $t\in[0,T]$ $D_t F(\omega)=D F(\omega)(t)$.
	
	The Clark-Ocone representation for any $F\in D_{2,1}$    was introduced in \cite{NP}, and generalized  to $F\in D_{1,1}$ in the  paper \cite{OK}. It has a form
	\begin{equation}\label{F}
	F=\E(F)+\int\limits_{0}^{T}\left(\E\left(D_t F|\F_t^W\right),dW(t)\right)=\E(F)+\sum\limits_{i=1}^{m}\int\limits_{0}^{T}\left(\E D_t^i F|\F_t^W\right)dW_i(t).
	\end{equation}
	Representation \eqref{F} is a clarification of \eqref{xi} in the sense that for $\xi\in D_{2,1}$ and more generally,  for $\xi\in D_{1,1}$, we can specify the form of the process $\vartheta$:
	$$\vartheta(t)=\E\left(D_tF|\F_t^W\right).$$

\section{Notion, examples   and representations  of Wiener-transformable  processes}\label{sec:3}

 \begin{definition}\label{def1} A gaussian process  $G=\{G(t), t\in\R^+\}$ is called $m$-Wiener-transformable if there exists such $m$-dimensional Wiener process $W=\{W(t),\linebreak t\in\R^+\}$ that $G$ and $W$ generate the same filtration, i.e. for any $t\in\R^+$ $$\mathcal{F}_t^G=\mathcal{F}_t^W.$$
 We say that $G$ is $m$-Wiener-transformable to $W$ (evidently, process $W$ can be non-unique.)
 \end{definition}
 \begin{remark}\begin{itemize}
   \item[$(i)$] In the case when $m=1$  we say that the  process $G$ is  Wiener-transformable.
   \item[$(ii)$] Being Gaussian so having moments of any order, $m$-Wiener-transformable process admits at each time $t\in\R^+$
   the martingale    representation $G(t)=\E(G(0))+\sum_{i=1}^m\int_0^tK_i(t,s)dW_i(s),$
   where $K_i(t,s)$ is $\mathcal{F}_s^W$-adapted for any $0\leq s\leq t$ and $\int_0^t\E(K_i(t,s))^2ds<\infty$ for any $t\in\R^+$.
   \end{itemize}
 \end{remark}

   \begin{example}\label{ex1}
Some simple examples of Wiener-transformable processes are:
	\begin{itemize}
	\item[(a)] Geometric Brownian motion involving the Wiener component  and having the form $$S=\left\{S(t)=S(0)\exp\left\{\mu t+\sigma W(t)\right\}, \;\; t\ge 0 \right\},$$
	with $S(0)>0$, $\mu\in\R$, $\sigma>0$, is Wiener-transformable to the underlying Wiener process $W$.
	\item[(b)] Fractional Brownian motion $B^H$ is Wiener-transformable to the Wiener process $W$ with which it is connected via relations \eqref{fbmviawin}--\eqref{winviafbm}.
	\item[(c)] Let $H>\frac{1}{2}$. Then the fractional Ornstein--Uhlenbeck process $Y=\{Y(t),  \linebreak t\ge 0\}$, involving fractional Brownian component and  satisfying the equation
	$$ Y(t)=Y_0+\int_0^t(b-aY(s))ds+\sigma  B^H(t),$$
	where $a,b\in\R$ and $\sigma>0$, is Wiener-transformable to the same Wiener process as the underlying fBm  $B^H$.
	\end{itemize}
	\end{example}
	\begin{example}
	Consider the collection of Hurst indices $\frac{1}{2}\le H_1< H_2<\ldots<H_m<1$ and independent fractional Brownian motions with corresponding Hurst indices $H_i$, $1\le i \le m$. Then the linear combination $\sum\limits_{i=1}^{m}a_iB^{H_i}$ is $m$-Wiener-transformable to the Wiener process $W=(W_1,\ldots,W_m)$, where $W_i$ is such Wiener process  to which fractional Brownian motion $B^{H_i}$ is Wiener-transformable. In particular,  the  mixed fractional Brownian motion  $M^H=W+B^H$, introduced in \cite{Cheridito}, is $2$-Wiener-transformable.
	\end{example}
	Now, let the random variable $\xi$ be $\F_T^W$-measurable, $\E\xi^2<\infty$ and $G=\left\{G(t),\F_t^W, t\in[0,T]\right\}$ be zero mean $W$-transformable Gaussian process. Assume that the covariance function of $G$ satisfies the following two-sided power bounds.
	\begin{itemize}
	\item[$(A)$] There exist $0<H_2\le H_1\le1$ and $C_1, C_2>0$ such that for any $s,t\in [0,T]$
	$$C_1\left|t-s\right|^{2H_1}\le\E\left|G(t)-G(s)\right|^2\le C_2 \left|t-s\right|^{2H_2}.$$
	\end{itemize}
Assume additionally  that the increments of $G$ are positively correlated. More exactly, let the following condition hold
\begin{itemize}
	\item[$(B)$] For any $0 \le s_1 \le t_1 \le s_2 \le t_2\le T$ 	 $$\E\left(G({t_1})-G({s_1})\right)\left(G({t_2})-G({s_2})\right)\ge0.$$

Here are some examples of Gaussian processes satisfying conditions $(A)$ and $(B)$  (for more detail and proofs see, e.g. \cite{mish-shev}):
\begin{itemize}
\item[$(i)$]fractional Brownian motion with index $H\in(0,1)$ satisfies condition $(A)$ with $H_1=H_2=H$ and satisfies condition $(B)$ if $H\in(\frac{1}{2},1)$;
\item[$(ii)$] fractional Ornstein-Uhlenbeck process of the simplified form
\begin{equation*}
Y(t) = Y_0 + a \int_0^t Y(s) ds + B^H(t), \mbox{ } t \geq 0
\end{equation*}
satisfies condition $(A)$ with $(0,1)\ni H=H_1=H_2$. It satisfies condition $(B)$ for $a>0$, otherwise its increments are
 neither positive nor negatively correlated.
\item[$(iii)$] subfractional Brownian motion with index $H$, that is a centered Gaussian process
$G^H=\left\{G^H(t), t \geq 0 \right\}$ with  covariance function
$$\E G^H(t) G^H(s) = t^{2H}+s^{2H} -\frac{1}{2}\left(|t+s|^{2H} + |t-s|^{2H} \right),$$
 satisfies condition $(A)$  with $(0,1)\ni H=H_1=H_2$ and satisfies condition $(B)$ for $H\in(\frac{1}{2},1)$.
\item[$(iv)$] bifractional Brownian motion with indices $H \in (0,1)$ and $K \in (0,1)$, that is a centered Gaussian process with covariance function
 $$ \E G^{H,K}(t) G^{H,K}(s) = \frac{1}{2^K} \left( \left(t^{2H}+s^{2H}\right)^K - |t-s|^{2HK}\right),$$ satisfies condition $(A)$ with $H_1 = H_2 = HK$ and satisfies condition $(B)$ for $HK>\frac{1}{2}$;
\item[$(v)$] Consider Volterra integral transform of Wiener process, that is the process of the form $G(t) = \int_0^t K(t,s) dW(s)$ with non-random kernel $K(t, \cdot) \in L_2[0,t]$ for $t\in[0,T]$. Let the constant $r\in[0,1/2)$ be fixed. Let the following conditions hold.

\begin{itemize}
\item[ $(B1)$] The kernel $K$ is non-negative on $[0,T]^2$  and for any $s\in [0,T]$   $K(\cdot,s)$ is non-decreasing in the first argument;
\item[$(B2)$]  There exist constants   $D_i>0, i=2,3$ and   $1/2<H_2<1$    such that   $$|K(t_2,s) - K(t_1,s)| \leq D_2 |t_2-t_1|^{H_2 }s^{\!-r},\,\,\,s,\; t_1,\;t_2 \in [0,T] $$
      and  $$\ K(t,s)\leq D_3(t-s)^{H_2-1/2}s^{-r};$$
\end{itemize}
      and at least one of the following conditions
\begin{itemize}
    \item[$(B3,a)$]  There exist constants  $D_1>0$ and $H_1\geq H_2 $   such that $$D_1|t_2-t_1|^{H_1}s^{-r}\leq|K(t_2,s) - K(t_1,s)|,\,\,\;\;s,\; t_1,\;t_2 \in [0,T];$$
     \item[$(B3,b)$]There exist constants  $D_1>0$ and $H_1\geq H_2 $   such that $$K(t,s)\geq D_1(t-s)^{H_1-1/2}s^{-r},\,\;\;s,\; t \in [0,T].$$
\end{itemize}
Then the Gaussian  process  $G(t) = \int_0^t K(t,s) dW(s)$, satisfies  conditions $(A)$  on any subinterval $[1-\delta, 1]$ for $0<\delta<1$  with powers $H_1, H_2$, and condition   $(B)$.

\end{itemize}
\item[$(vi)$] As before, let us take the collection of Hurst indices $\frac{1}{2}\le \widetilde{H}_1< \widetilde{H}_2<\ldots<\widetilde{H}_m<1$ and independent fractional Brownian motions with corresponding Hurst indices $\widetilde{H}_i$, $1\le i \le m$. Then the linear combination $\sum\limits_{i=1}^{m}a_iB^{\widetilde{H}_i}$ satisfies condition  $(A)$   with $H_1=H_2=\widetilde{H}_1$, and condition $(B)$.
\end{itemize}
The next result is proved in \cite{mish-shev}.
\begin{theorem}\label{thm1} (Representation theorem)
Assume that an adapted Gaussian process $G=\{G(t)$, $t\in[0,T]\}$ satisfies condition  $(A)$  with  $0<2H_1-1<H_2<H_1$ and condition $(B)$. Additionally let random variable $\xi$ satisfy the following condition
\begin{itemize}
	\item[$(C)$] $\xi=U(T)$ for some adapted process $U\in C^{\rho}[0,T]$ with $\rho>\rho_0$, where
	$\rho_0=\frac{(1+H_2)(H_1-H_2)}{H_2+1-2H_1}.$
\end{itemize}
Then there exists an adapted process $\psi$ that $\left\|\psi\right\|_{\alpha,T}<\infty$ for some $\alpha\in \left(1-H_2,\frac{1}{2}\right)$ and $\xi$ admits the representation
 \begin{equation}\label{reprez}
\xi=\int\limits_{0}^{T}\psi(s) dG(s),
\end{equation}  almost surely, where $\int\limits_{0}^{T}\psi(s) dG(s)$ is understood as a generalized Lebesgue-Stieltjes integral.
\end{theorem}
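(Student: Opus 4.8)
The plan is to reduce the statement to a purely analytic construction of the integrand and then to build $\psi$ by replicating $U$ on successively finer partitions, using conditions $(A)$ and $(B)$ to keep the $\alpha$-norm of $\psi$ finite. First I would fix the regularity of the integrator: from the upper bound in $(A)$ and the Gaussianity of $G$ one has $\E|G(t)-G(s)|^{2p}\le C_p|t-s|^{2pH_2}$ for every $p\ge1$, so the Kolmogorov criterion gives a modification of $G$ with trajectories in $C^\theta[0,T]$ for every $\theta<H_2$. Since the interval $(1-H_2,1/2)$ is assumed non-empty we have $H_2>1/2$; fixing $\theta\in(1/2,H_2)$ and $\alpha\in(1-\theta,1/2)\subset(1-H_2,1/2)$, the bound \eqref{equ:ineq} shows that for any adapted $\psi$ with $\|\psi\|_{\alpha,T}<\infty$ the integral $\int_0^T\psi\,dG$ is a well-defined generalized Lebesgue--Stieltjes integral with $|\int_0^T\psi\,dG|\le\Lambda_\alpha(G)\|\psi\|_{\alpha,T}$, and $\Lambda_\alpha(G)<\infty$ almost surely. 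Thus it suffices to exhibit an adapted $\psi$ of finite $\alpha$-norm whose pathwise integral against $G$ equals $\xi=U(T)$.

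Next I would construct $\psi$ by a dyadic multiresolution scheme. On the level-$n$ grid $t^n_k=kT2^{-n}$ I would define a simple adapted process $\psi^{(n)}$ so that $\int_0^T\psi^{(n)}\,dG$ reproduces the level-$n$ detail of the target, i.e. the coarse sums telescope to $U(T)$ while each successive level only corrects the residual left by the coarser ones; the integrand is then $\psi=\sum_n\psi^{(n)}$. The two things to check are the exact replication $\sum_n\int_0^T\psi^{(n)}\,dG=\xi$ almost surely and the summability $\sum_n\|\psi^{(n)}\|_{\alpha,T}<\infty$, the latter yielding $\|\psi\|_{\alpha,T}<\infty$. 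Because the coefficients of $\psi^{(n)}$ on an interval are ratios of an increment of $U$, of order $2^{-n\rho}$ since $U\in C^\rho$, to an increment of $G$, the scheme must be arranged so that these coefficients depend only on the information available at the left endpoint of each interval, which is what keeps $\psi$ adapted.

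The core of the argument is the $\alpha$-norm bound, and this is where $(A)$, $(B)$ and the threshold $\rho_0$ enter. The lower bound in $(A)$, $\E|G(t)-G(s)|^2\ge C_1|t-s|^{2H_1}$, gives the typical order $2^{-nH_1}$ of an increment of $G$, so the coefficients of $\psi^{(n)}$ are of order $2^{-n(\rho-H_1)}$. Condition $(B)$ makes the variance of increments superadditive: for adjacent intervals $s\le u\le t$ it forces $\E|G(t)-G(s)|^2\ge\E|G(t)-G(u)|^2+\E|G(u)-G(s)|^2$, and this is what controls the influence of atypically small increments and rules out cancellation when passing from one scale to the next. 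Inserting the scale-$n$ size and oscillation estimates of $\psi^{(n)}$ into the two terms of $\|\cdot\|_{\alpha,T}$ — one integrating $s^{-\alpha}$, the other the kernel $(s-z)^{-1-\alpha}$ over intervals of length $2^{-n}$ — and summing over $n$ produces a geometric series whose ratio is below $1$ precisely when $\rho>\rho_0=\frac{(1+H_2)(H_1-H_2)}{H_2+1-2H_1}$; the hypothesis $2H_1-1<H_2$ is exactly what makes the denominator positive so that $\rho_0$ is finite, and when $H_1=H_2$ the gap vanishes, $\rho_0=0$, and any H\"{o}lder target suffices, in agreement with the fractional Brownian case.

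The hardest part will be to make the control of the coefficients pathwise rather than merely in $L^2$: the lower bound in $(A)$ only bounds the second moment of an increment, whereas the $\alpha$-norm estimate requires uniform lower control of the actual denominators over the whole dyadic grid. This calls for a small-ball, non-degeneracy estimate for Gaussian increments, uniform in $n$ and $k$, combined with the superadditivity from $(B)$, to guarantee that the rare exceptionally small increments of $G$ do not blow up $\sum_n\|\psi^{(n)}\|_{\alpha,T}$; reconciling this pathwise control with the adaptedness constraint on the coefficients is the delicate point. Once the uniform summability is established, the exact replication $\sum_n\int_0^T\psi^{(n)}\,dG=\xi$ follows by applying \eqref{equ:ineq} to the tails $\sum_{n\ge N}\psi^{(n)}$, whose $\alpha$-norms tend to $0$, so that the partial integrals converge to $U(T)=\xi$ almost surely.
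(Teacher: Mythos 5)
Your overall architecture --- reduce to a pathwise statement via the H\"older regularity of $G$ obtained from the upper bound in $(A)$ and Kolmogorov's criterion, then build $\psi$ as a sum of adapted corrections over successively finer scales, with small-ball estimates for Gaussian increments as the key technical input and $\rho>\rho_0$ guaranteeing summability of the $\alpha$-norms --- is in the same spirit as the construction the paper relies on (the theorem is quoted from \cite{mish-shev}; the paper reproduces the construction of $\psi$ in Section \ref{sec:6}), and you correctly isolate where the difficulty lies. But there is a genuine gap exactly at that point, and it is not one you can defer: the coefficient of your $\psi^{(n)}$ on a dyadic interval $[t^n_k,t^n_{k+1}]$ is a ratio whose denominator is the increment $G(t^n_{k+1})-G(t^n_k)$, which is $\mathcal{F}_{t^n_{k+1}}$-measurable rather than $\mathcal{F}_{t^n_k}$-measurable, and the same is true of the increment of $U$ in the numerator. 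The sentence ``the scheme must be arranged so that these coefficients depend only on the information available at the left endpoint'' states the requirement, but no arrangement satisfying it is given, and the natural ratio construction violates it identically, not merely on a rare event; no moment, superadditivity or small-ball estimate repairs a failure of adaptedness.

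The construction that actually works (Section \ref{sec:6}) avoids the ratio altogether. On $[t_n,t_{n+1}]$ the target is the already observed value $U(t_{n-1})$, so the required correction $v_n=V_{t_n}-\xi_n$ is $\mathcal{F}_{t_n}$-measurable, and replication is achieved by a stopping-time mechanism: one integrates $\sigma_n g_n'(G(t)-G(t_n))$ with $g_n(x)=\sqrt{x^2+\nu_n^2}-\nu_n$, so that the running integral equals $\sigma_n g_n(G(t)-G(t_n))$ by the pathwise change-of-variables formula, and one stops the first time it reaches the prescribed level; on the exceptional event where this could fail one switches to an adapted integrand whose integral blows up to $+\infty$ before $t_{n+1}$, which is precisely where conditions $(A)$, $(B)$ and the small-ball estimates enter. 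Your proposal would need to be rebuilt around some such adapted replication device. A second, smaller gap: the threshold $\rho_0=\frac{(1+H_2)(H_1-H_2)}{H_2+1-2H_1}$ is asserted to drop out of your geometric series but never derived; its specific form encodes the balance between the small-ball exponent (through $H_1$), the H\"older exponent of $G$ (through $H_2$) and the choice $\alpha\in(1-H_2,\frac12)$, and without carrying out that computation the claim that the series converges precisely when $\rho>\rho_0$ is unsupported.
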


 \begin{remark} As it was mentioned in \cite{mish-shev}, it is sufficient to require the properties $(A)$ and $(B)$ to hold on some subinterval $[1-\delta,1]$. Also, in the case $H_1=H_2$ we have $\rho_0=0$, so, we can consider any  $\rho>0$ in condition $(C)$. Therefore, the representation theorem holds for fBm with $H>\frac{1}{2}$, fractional Ornstein-Uhlenbeck process with positive drift and $H>\frac{1}{2}$, subfractional and bifractional Brownian motions with $H>\frac{1}{2}$ and $HK>\frac{1}{2}$ correspondingly, and the linear combination of fBms with $H_i>\frac{1}{2}$, all of the above under additional condition $(C)$ with any $\rho>0$. Moreover, the representation theorem holds for
Volterra integral transform of a Wiener process under additional assumption $0 < 2H_1-1<H_2<H_1$ and condition $(C)$.
 As it was mentioned in \cite{mish-shev}, the representation theorem is valid for a fractional Ornstein-Uhlenbeck
 process with negative drift coefficient too. Indeed, we can annihilate the drift of the fractional Ornstein-Uhlenbeck process with the help of Girsanov theorem,
   transforming a fractional  Ornstein-Uhlenbeck process with negative drift to a fractional Brownian motion $\widetilde{B}^H$.
    Then, assuming  condition $(C)$, we represent the random variable $\xi$ as
    $\xi=\int_0^T\psi(s)d\widetilde{B}^H(s)$ on the new probability space. Finally, we return to the original probability space.

For the case of the mixed model $W +B ^H$, $H>\frac{1}{2}$,  according to \cite{shev-viita}, if we consider the natural filtration generated by $(W,B ^H)$ and  a random variable $\xi$ with  $\E\xi^2<\infty$, then the representation $\xi= \int_0^T\psi(s)d\left(W(s) + B^H(s) \right)$ holds, where $\int_0^T \psi(s) dW(s)$ is an It\^{o} integral, and $\int_0^T \psi(s) dB^H(s)$ is a generalized Lebesgue-Stieltjes integral. We do not need any ``H\"{o}lder'' condition on $\xi$ in this case.
 \end{remark}

Consider now conditions supplying H\"{o}lder properties of stochastic integrals and their quadratic characteristics.
\begin{lemma}\label{lem1}
Let $\vartheta=\{\vartheta(t), t\in [0,T]\}$ be real-valued progressively measurable process for which $$ \Prob\left\{\int\limits_{0}^{T}\vartheta^2(s)ds<\infty\right\}=1$$ and one of the following conditions hold:
\begin{itemize}
\item[$(i)$] $\Prob\left\{\sup\limits_{0\le t \le T}\left|\vartheta(t)\right|\ge C\right\}\rightarrow 0$ as $C\rightarrow\infty$;
\item[$(ii)$] There exists such $\delta>0$ that $\sup\limits_{t\in[0,T]}\E\left|\vartheta(t)\right|^{2+\delta}<\infty$;
\item[$(iii)$] There exists such $\delta>0$ that $\E\int\limits_{0}^{T}|\vartheta(t)|^{4+\delta}dt<\infty$.
\end{itemize}
Then under condition $(i)$  stochastic integral $\int\limits_{0}^{t}\vartheta(s)dW(s)$ is H\"{o}lder up to order $\frac{1}{2}$ and its quadratic characteristic $\int\limits_{0}^{t}\vartheta^2(s)ds$ is Lipschitz; under condition $(ii)$  stochastic integral $\int\limits_{0}^{t}\vartheta(s)dW(s)$ is H\"{o}lder of  order   $\frac{\delta}{4+2\delta}$ and its quadratic characteristic $\int\limits_{0}^{t}\vartheta^2(s)ds$ is H\"{o}lder of  order   $\frac{\delta}{2+\delta}$, and under condition $(iii)$ stochastic integral $\int\limits_{0}^{t}\vartheta(s)dW(s)$ is H\"{o}lder of  order   $\frac{\delta}{8+2\delta}$ and its quadratic characteristic $\int\limits_{0}^{t}\vartheta^2(s)ds$ is H\"{o}lder of  order   $\frac{\delta}{4+\delta}$.
\end{lemma}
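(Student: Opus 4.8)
The plan is to treat the stochastic integral $M(t):=\int_0^t\vartheta(s)\,dW(s)$ and its quadratic characteristic $\langle M\rangle(t)=\int_0^t\vartheta^2(s)\,ds$ separately, and in each case to reduce the assertion to a moment bound on increments of the type $\E|X(t)-X(s)|^{p}\le C|t-s|^{1+\beta}$, after which the Kolmogorov--Chentsov continuity theorem produces a modification whose trajectories are H\"older of any order strictly below $\beta/p$; this modification is indistinguishable from the continuous version of the integral, two continuous modifications necessarily agreeing. The two workhorses are the Burkholder--Davis--Gundy inequality, giving for $p\ge2$ the bound $\E|M(t)-M(s)|^{p}\le C_p\,\E\big(\int_s^t\vartheta^2(u)\,du\big)^{p/2}$, and Jensen's inequality for the normalized measure $(t-s)^{-1}\,du$ on $[s,t]$, which for an exponent $r\ge1$ yields $\big(\frac1{t-s}\int_s^t\vartheta^2(u)\,du\big)^{r}\le\frac1{t-s}\int_s^t|\vartheta(u)|^{2r}\,du$.

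Under condition $(ii)$ I would take $p=2+\delta$ in the Burkholder--Davis--Gundy bound and then apply Jensen with $r=p/2$, obtaining
$$\E|M(t)-M(s)|^{2+\delta}\le C\,(t-s)^{\delta/2}\int_s^t\E|\vartheta(u)|^{2+\delta}\,du\le C\,(t-s)^{1+\delta/2}\sup_{u}\E|\vartheta(u)|^{2+\delta},$$
so that Kolmogorov--Chentsov with $p=2+\delta$ and $\beta=\delta/2$ returns the H\"older exponent $\frac{\delta}{4+2\delta}$. Under condition $(iii)$ the only change is the choice $p=4+\delta$, which after the same two steps gives $\E|M(t)-M(s)|^{4+\delta}\le C\,(t-s)^{1+\delta/2}\,\E\int_0^T|\vartheta(u)|^{4+\delta}\,du$ and hence, with $\beta=\delta/2$, the exponent $\frac{\delta}{8+2\delta}$.

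For the quadratic characteristic the Burkholder--Davis--Gundy step is unnecessary, since $\langle M\rangle(t)-\langle M\rangle(s)=\int_s^t\vartheta^2(u)\,du\ge0$ and Jensen bounds its $q$-th moment directly. Choosing $q=1+\delta/2$ under $(ii)$, so that $2q=2+\delta$, gives $\E|\langle M\rangle(t)-\langle M\rangle(s)|^{q}\le C(t-s)^{q}$, i.e. $1+\beta=q$ with $\beta=\delta/2$, whence Kolmogorov--Chentsov yields the exponent $\frac{\delta}{2+\delta}$; choosing $q=2+\delta/2$ under $(iii)$, so that $2q=4+\delta$, gives $\E|\langle M\rangle(t)-\langle M\rangle(s)|^{q}\le C(t-s)^{q-1}$, again with $\beta=\delta/2$, and the exponent $\frac{\delta}{4+\delta}$.

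Finally, condition $(i)$ is precisely the statement that $K:=\sup_{0\le t\le T}|\vartheta(t)|<\infty$ almost surely, because the events $\{\sup|\vartheta|\ge C\}$ decrease to $\{\sup|\vartheta|=\infty\}$. The Lipschitz claim is then immediate, as $|\langle M\rangle(t)-\langle M\rangle(s)|\le K^2|t-s|$. For the integral I would localize: with $\tau_N=\inf\{t:|\vartheta(t)|\ge N\}\wedge T$ the stopped integrand $\vartheta\,\mathbb{I}_{[0,\tau_N]}$ is bounded by $N$, so the estimate derived under $(ii)$ applies for \emph{every} $\delta$, i.e. for every exponent $p$, showing that $M^{\tau_N}$ is H\"older of any order below $\tfrac12$; on the event $\{\tau_N=T\}$ one has $M\equiv M^{\tau_N}$, and since $\Prob\{\tau_N=T\}=\Prob\{K\le N\}\to1$ these events exhaust $\Omega$ up to a null set, which gives the claim. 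I expect this localization to be the only genuinely delicate point: one must verify that stopping really produces a bounded integrand (taking care of the regularity of $\vartheta$ and of the behaviour of the supremum at $\tau_N$) and that the pathwise H\"older property passes to the exhausting union. The remaining cases are a routine combination of Burkholder--Davis--Gundy, Jensen and Kolmogorov--Chentsov with the exponent bookkeeping recorded above.
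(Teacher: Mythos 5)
Your proposal is correct and follows essentially the same route as the paper: Burkholder--Davis--Gundy plus Jensen (the paper invokes Lemma 4.12 of Liptser--Shiryaev, which is exactly this combination) to get the increment moment bounds, then Kolmogorov--Chentsov, with the same exponent bookkeeping in cases $(ii)$ and $(iii)$ for both the integral and its quadratic characteristic. The only cosmetic difference is in case $(i)$, where you localize by stopping at $\tau_N$ while the paper truncates the integrand at level $\pm C$ and notes that $Z$ and $Z^C$ coincide off an event of vanishing probability; both are valid instances of the same localization idea.
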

\begin{remark} Note that none of the conditions $(i)$--$(iii)$ can   be embedded   into another one. Indeed,
function $\vartheta(t)=t^{-\frac{1}{4+2\delta}}$ satisfies condition $(iii)$ but not $(i)$ and $(ii)$. Further, let for the technical simplicity, $T=1$. Then the process $\vartheta(t)=\exp\left\{\frac12 {W^2(t)}\right\}$ satisfies condition $(i)$ but not $(ii)$ and $(iii)$. Indeed, for $C>1$
\begin{align*}
&\Prob\left\{\sup\limits_{0\le t \le T}\exp\left\{\frac12 {W^2(t)}\right\}\ge C\right\}\\
&=\Prob\left\{\sup\limits_{0\le t \le T}|W(t)|\ge \sqrt{2\log C }\right\}\rightarrow 0, \;\; C\rightarrow\infty,
\end{align*} so, $(i)$ holds and, in addition,
$$\E\int\limits_{0}^{T}|\vartheta(t)|^2dt=\int\limits_0^1\int\limits_\R e^{\frac{x^2t}{2}}\frac{e^{-\frac{x^2}{2}}}{\sqrt{2\pi}}dxdt=\frac{1}{\sqrt{2\pi}}\int\limits_0^1\int\limits_\R e^{\frac{x^2(t-1)}{2}}dxdt$$
$$=\frac{1}{\sqrt{2\pi}}\int\limits_0^1\int\limits_\R e^{\frac{-x^2z}{2}}dxdz=\int\limits_0^1z^{-\frac{1}{2}}dz<\infty.$$
However, for any $\delta>0$ and any $t>0$
$$\E|\vartheta(t)|^{2+\delta} =\frac{1}{\sqrt{2\pi}}\int\limits_\R \exp{\left\{\frac{(2+\delta)x^{2}t }{2}\right\}}e^{-\frac{x^2}{2}}dx =\infty $$ for $t\in[\frac{1}{2+\delta},1] $ which obviously means that neither $(ii)$  nor $(iii)   hold. $ Furthermore, the process $\vartheta(t)=\left(\frac{e^{\frac{W^2(t)}{3}}-1}{t}\right)^{\frac{1}{2+\delta}} $ satisfies condition $(ii)$ but not $(i)$ and $(iii)$. Indeed, it follows from the log log law that $$ \sup\limits_{0\le t \le 1} \frac{e^{\frac{W^2(t)}{3}}-1}{t} \geq \sup\limits_{0\le t \le 1}   \frac{ W^2(t)}{3 t}   =\infty,$$
so $(i)$ does not hold, while

\begin{eqnarray}\sup_{0\leq t\leq 1}\E|X_t|^{2+\delta}& = &\sup_{0\leq t\leq 1} \E \frac{e^\frac{W^2(t)}{3}-1}{t} \\
&=& \nonumber
\frac{1}{\sqrt{2\pi}}\sup_{0\leq t \leq 1}\int_{\mathbb{R}}\left(\frac{ e^{\frac{x^2t}{3}}-1}{t}\right)e^{-\frac{x^2}{2}}dx\\
&=& \nonumber\frac{1}{\sqrt{2\pi}}\int_{\mathbb{R}}\left(  e^{\frac{x^2}{3}} -1\right)e^{-\frac{x^2}{2}}dx<\infty,
\end{eqnarray}
because for any $x\in\R$  function $f(x,t)=\frac{e^{\frac{x^2t}{3}}-1}{t}$ increases in $t$ and achieves its maximal value on the interval $(0,1]$ at $t=1$. Finally,
$\E|\vartheta(t)|^{4+\delta} \linebreak= t^{-\frac{4+\delta}{2+\delta}} \E\left( e^{\frac{W^2(t)}{3}}-1\right)^{\frac{4+\delta}{2+\delta}}=+\infty$ for any $0<\delta<2$ and any $\frac{6+3\delta}{8+2\delta}<t<1$, therefore, $(iii)$ does not hold.

\end{remark}

\begin{proof}
Let condition $(i)$ hold. Consider the stochastic processes $$Z^C(t)=\int\limits_0^t\left(\left(\vartheta(u)\wedge C\right)\vee (-C)\right)dW(u) \;\;\; \text{and} \;\;\; Z(t)=\int\limits_0^t\vartheta(u)dW(u).$$
Then, according to Burkholder inequalities,  for any $0<s<t\le T$ and  for any $p>1$
$$\E\left|Z^C(t)-Z^C(s)\right|^p\le C_p \E\left(\int\limits_s^t|(\vartheta(u)\wedge C)\vee (-C)|^2du\right)^{\frac{p}{2}}\le C_p C^p (t-s)^{\frac{p}{2}}.$$
Recall  the well-known Kolmogorov theorem which states that under the condition
$$\E\left|U(t)-U(s)\right|^p\le C    (t-s)^{1+q}$$
the trajectories of stochastic process $U$ are H\"{o}lder with probability 1 of order $\frac{q}{p}$. Applying this result, we conclude that  process $Z^C$ is H\"{o}lder with any order $\frac{1}{2}-\frac{1}{p}$, thus it is H\"{o}lder  up to order $\frac{1}{2}$. But
$$\Prob\left\{Z(t)\not=Z^C(t)\right\}=\Prob\left\{\sup\limits_{0\le t \le T}|\vartheta(t)|\ge C\right\}\rightarrow 0, \;\;\; C\rightarrow\infty.$$
Therefore, a.a. trajectories of $Z$ are H\"{o}lder up to order $\frac{1}{2}$.

Let condition $(ii)$ hold. Then for any $0\le s < t \le T$ and according to Lemma 4.12 \cite{Lip_Sh} for any $\delta>0$
\begin{equation}
\label{eq1}
\begin{gathered}
\E\left|\int\limits_s^t \vartheta(u)dW(u)\right|^{2+\delta}\le C_\delta(t-s)^\frac{\delta}{2}\E\int\limits_s^t\left|\vartheta(u)\right|^{2+\delta}du\\
\le\sup\limits_{0\le t \le T}\E|\vartheta(t)|^{2+\delta}(t-s)^{1+\frac{\delta}{2}}.
\end{gathered}
\end{equation}
It means that now the  process $Z$ is H\"{o}lder of order $\frac{\delta}{4+2\delta}.$
Let condition $(iii)$ hold. Then, similarly to \eqref{eq1}, for any $\delta>0$
$$\E \left|\int\limits_s^t\vartheta(u)dW(u)\right|^{4+\delta}\le C_\delta(t-s)^{1+\frac{\delta}{2}}\E\int_s^t\left|\vartheta(u)\right|^{4+\delta}du$$
and  we conclude similarly to $(ii)$ that the process  $Z$  is H\"{o}lder of order $\frac{\delta}{8+2\delta}.$
Concerning quadratic characteristic $\langle Z\rangle(t) = \int_0^t \vartheta^2(s)ds$ , under condition $(i)$ we have  the process
$\int_0^t \left( | \vartheta(s)|\wedge C\right)^2 ds$ is Lipschitz, but  $$\Prob \left\{\int_0^t \left(|\vartheta(s)| \right)^2ds \neq \int_0^t \vartheta^2(s)ds, 0 \leq t \leq T \right\}\rightarrow 0, C\rightarrow \infty.$$ Therefore, $\int_0^t \vartheta^2(s) ds$ is Lipschitz.
Under condition $(ii)$
\begin{eqnarray}
\E\left(\int_s^t \vartheta^2(u)du \right)^{1 + \frac{\delta}{2}}&\leq& \E \left(\int_s^t |\vartheta(u)|^{2+\delta}du\right)(t-s)^\frac{\delta}{2}
\\
&\leq& \sup_{0 \leq t \leq T} \E|\vartheta(t)|^{2+\delta}(t-s)^{1+\frac{\delta}{2}}.
\end{eqnarray}
Therefore, $\int_0^t \vartheta^2(s)ds$ is H\"{o}lder of order $\frac{\delta}{2+\delta}.$ Under condition $(iii)$
\begin{equation}
\begin{gathered}
\E\left(\int_s^t \vartheta^2(u)du \right)^{2 + \frac{\delta}{2}} \leq  \E \left(\int_s^t |\vartheta(u)|^{4+\delta}du\right)(t-s)^{1+\frac{\delta}{2}}\\
\leq \E\int_0^T|\vartheta(u)|^{4+\delta}du(t-s)^{1+\frac{\delta}{2}},
\end{gathered}
\end{equation}
and, consequently, $\int_0^t \vartheta^2(u)du$ is H\"{o}lder of order $\frac{\delta}{4+ \delta}$.
\end{proof}
\begin{remark}
 For $\xi\in D_{1,1}$ we can rewrite conditions $(i)$--$(iii)$ replacing $\vartheta(t)$ with $\E(D_t\xi|\F_t)$.
  \end{remark}

\section{Expected utility maximization for unrestricted capital profiles}\label{sec:4}
Consider the problem of maximizing the expected utility. Our goal is to characterize the optimal asset profiles in the framework of the markets with  risky assets involving  Gaussian processes satisfying conditions of Theorem \ref{thm1}. We follow the general approach described in \cite{ekel} and \cite{karat}, but apply its  interpretation from \cite{Foll-Sch}.  We fix $T>0$ and from now on consider $\F_T^W$-measurable random variables. Let the utility function $u:\R\rightarrow\R$ be strictly increasing and strictly concave, $L^0(\Omega, \F_T^W, \Prob)$ be the set of all $\F_T^W$-measurable random variables, and let the set of admissible capital profiles coincides with $L^0(\Omega, \F_T^W, \Prob)$. Let $\Prob^*$ be a probability measure on $(\Omega, \F_T^W)$, which is equivalent to $\Prob$, and denote $\varphi(T)=\frac{d\Prob^*}{d\Prob}$.  The budget constraint is given  by $\E_{\Prob^*}(X)=w$, where $w>0$ is some number that can be in some cases, but not obligatory, interpreted as the initial wealth. Thus the budget set is defined as
$$\mathcal{B}=\left\{X\in L^0\left(\Omega, \F_T^{W}, \Prob\right)\cap L^1\left(\Omega, \F_T^W, \Prob^* \right)|\E_{\Prob^*}(X)=w\right\}.$$
The problem is to find such $X^*\in\mathcal{B}$, for which $\E( u(X^*))=\max\limits_{X\in\mathcal{B}} \E( u(X))$. Consider the inverse function $I(x)=(u'(x))^{-1}$.
 \begin{theorem}(\cite{Foll-Sch}, Theorem 3.34)\label{Theorem main for max}
Let the following condition hold:
\label{Follmer-Sch}
   Strictly increasing and strictly concave utility function $u:\R\rightarrow\R$ is continuously differentiable, bounded from above and $$\lim_{x\downarrow -\infty} u'(x)=+\infty.$$
Then the solution of this maximization problem has a form $$X^*=I(c\varphi(T)),$$   under additional assumption  that $\E_{\Prob^*}(X^*)=w$.
\end{theorem}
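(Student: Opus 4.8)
The plan is to identify the candidate maximizer by a Lagrangian heuristic and then to verify its optimality rigorously using only the concavity of $u$. First I would rewrite the budget constraint under the physical measure: since $\varphi(T)=d\Prob^*/d\Prob$, we have $\E_{\Prob^*}(X)=\E(\varphi(T)X)$, so $\mathcal{B}$ consists precisely of those $X$ with $\E(\varphi(T)X)=w$. Introducing a multiplier $c$ for this linear constraint and maximizing the integrand $u(X(\omega))-c\varphi(T)(\omega)X(\omega)$ pointwise in $\omega$, the first-order condition $u'(X)=c\varphi(T)$ yields the candidate $X^*=I(c\varphi(T))$, where $I=(u')^{-1}$. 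This derivation is only formal; the content of the proof lies in the verification below.

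The verification rests on the tangent-line inequality for the differentiable concave function $u$, namely $u(x)\le u(x^*)+u'(x^*)(x-x^*)$ for all real $x,x^*$. Applying it pointwise with $x=X(\omega)$ and $x^*=X^*(\omega)$, and using the identity $u'(I(y))=y$ (so that $u'(X^*)=u'(I(c\varphi(T)))=c\varphi(T)$), I obtain almost surely
$$u(X)\le u(X^*)+c\varphi(T)(X-X^*).$$
Taking $\Prob$-expectations and recalling that $\E(\varphi(T)Y)=\E_{\Prob^*}(Y)$ for $\Prob^*$-integrable $Y$, the cross term equals $c(\E_{\Prob^*}(X)-\E_{\Prob^*}(X^*))$, which vanishes because both $X$ and $X^*$ belong to $\mathcal{B}$ and hence satisfy the budget constraint $\E_{\Prob^*}(\cdot)=w$. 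Therefore $\E(u(X))\le\E(u(X^*))$ for every admissible $X$, which proves optimality; strict concavity of $u$ yields uniqueness.

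It remains to argue that the candidate is well-defined and that the multiplier exists. The hypotheses on $u$ are tailored to make $I$ a bijection from $(0,\infty)$ onto $\R$: strict concavity makes $u'$ strictly decreasing and hence invertible; $\lim_{x\downarrow-\infty}u'(x)=+\infty$ forces $I(y)\to-\infty$ as $y\to+\infty$; and boundedness of $u$ from above, together with monotonicity of $u'$, forces $u'(x)\to0$ as $x\to+\infty$, so that $I(y)\to+\infty$ as $y\downarrow0$. Since $\Prob^*\sim\Prob$ gives $\varphi(T)>0$ almost surely, $X^*=I(c\varphi(T))$ is well-defined for every $c>0$. The main obstacle is the existence of a multiplier $c$ with $\E_{\Prob^*}(X^*)=w$: one would show that $c\mapsto\E_{\Prob^*}(I(c\varphi(T)))$ is continuous and strictly decreasing with full range $\R$ (via monotone and dominated convergence, using the monotonicity and boundary limits of $I$) and then invert it. In the statement as given this step is bypassed by assuming $\E_{\Prob^*}(X^*)=w$ outright, which simultaneously guarantees the integrability ($X^*\in L^1(\Prob^*)$ and finiteness of the cross term) needed for the expectations above to be meaningful, so the concavity argument closes with no further conditions.
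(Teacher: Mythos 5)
Your verification argument is correct and is essentially the standard proof of this result; the paper itself offers no proof, since the statement is imported verbatim as Theorem 3.34 of F\"{o}llmer--Schied, and your tangent-line/Lagrangian argument (pointwise inequality $u(X)\le u(X^*)+c\varphi(T)(X-X^*)$, then integrate and use the common budget constraint) is precisely the argument used in that reference. Your supplementary remarks --- that boundedness from above plus $\lim_{x\downarrow-\infty}u'(x)=+\infty$ make $u'$ a bijection onto $(0,\infty)$ so $I(c\varphi(T))$ is well defined for $\varphi(T)>0$ a.s., and that the hypothesis $\E_{\Prob^*}(X^*)=w$ bypasses the existence of the multiplier while securing the integrability of the cross term --- are also accurate and address the only genuinely delicate points.
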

To connect the solution of maximization problem with specific $W$-transform\-able Gaussian process describing the price process, we consider the following items.

1. Consider  random variable $\varphi(T)$,  $\varphi(T)>0$ a.s. and let $\E(\varphi(T))=1.$  Being the terminal value of a positive martingale $\varphi=\{\varphi_t=\E(\varphi(T)|\F_t^W), t\in[0,T]\}$, $\varphi(T)$ admits the following representation
\begin{equation}\label{fi}
\varphi(T)=\exp\left\{\int\limits_0^T \vartheta(s)dW_s-\frac{1}{2}\int\limits_0^T \vartheta^2(s)ds\right\},
\end{equation}
  where $\vartheta$ is a real-valued progressively measurable process for which $$ \Prob\left\{\int\limits_{0}^{T}\vartheta^2(s)ds<\infty\right\}=1.$$
Assume that process $X$ satisfies one of the conditions $(i)$--$(iii)$ of Lemma \ref{lem1}. Then $\varphi(T)$ is the terminal value of a H\"{o}lder process of the order specified by Lemma \ref{lem1}.

2. Consider $W$-transformable Gaussian process $G=\{G(t), t\in[0,T]\}$ satisfying conditions $(A)$ with some  $0<2H_1-1<H_2<H_1$ and $(B)$, and introduce the set
\begin{gather*}
\mathcal{B}_w^G=\bigg\{\psi=\left\{\psi(t),\F_t^W, t\in[0,T]\right\}:\,\,\,\,\,\,\\\mbox{there exists a generalized Lebesgue-Stieltjes integral} \\
 \int\limits_0^t \psi(s)dG(s),\;\; t\in[0,T] \;\; \text{and} \;\; \E\bigg(\varphi(T)\int\limits_0^T\psi(s) dG(s)\bigg)=w\bigg\}.
\end{gather*}
Denote $H_3=\frac{(1+H_2)(H_1-H_2)}{H_1+1-2H_2}$.
\begin{theorem} \label{TheoremIntRepresentation} Let the following conditions hold
\begin{itemize}
 \item[$(i)$] Function $I(x),x\in\R$ is H\"older of order $\lambda>0$.
 \item[$(ii)$] Stochastic process $\vartheta$ in representation \eqref{fi} satisfies one of assumptions $(i)$--$(iii)$ of Lemma \ref{lem1}.
     \item[$(iii)$] Gaussian process $G$ satisfies condition $(A)$ with $0<2H_1-1<H_2<H_1$ and condition $(B)$.
   \item[$(iv)$]    $\frac{\lambda}{2}>H_3$ in the case when $\vartheta$ satisfies assumption  $(i)$, $\frac{\lambda\delta}{4+2\delta}>H_3$ in the case when $\vartheta$ satisfies assumption  $(ii)$  and $\frac{\lambda\delta}{8+2\delta}>H_3$ in the case when $\vartheta$ satisfies assumption  $(iii)$ of Lemma \ref{lem1}.
       \item[$(v)$] There exists such $c\in\R$ that $\E(\varphi(T)I(c\varphi(T)))=w$.
\end{itemize}        Then the random variable $X^*=I(c\varphi(T))$  admits the representation
\begin{equation}\label{reprmain}
X^*=\int\limits_0^T\overline{\psi}(s)dG(s),
\end{equation}
with some $\overline{\psi}\in \mathcal{B}_w^G,$
and
\begin{equation}\label{maxim}\E( u(X^*))=\max\limits_{\psi\in\mathcal{B}_w^G}\E \left(u\left(\int\limits_0^T\psi(s)dG(s)\right)\right).\end{equation}
\end{theorem}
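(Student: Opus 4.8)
The plan is to exhibit an adapted process with H\"older trajectories whose terminal value is exactly $X^*=I(c\varphi(T))$, feed it into the Representation Theorem \ref{thm1} to obtain \eqref{reprmain}, and then read off the optimality \eqref{maxim} from Theorem \ref{Theorem main for max}. First I would pass to the positive martingale $\varphi=\{\varphi_t=\E(\varphi(T)|\F_t^W),\,t\in[0,T]\}$ and write it in the exponential form \eqref{fi} as $\varphi_t=\exp\{Z(t)-\tfrac12\langle Z\rangle(t)\}$, where $Z(t)=\int_0^t\vartheta(s)\,dW(s)$ and $\langle Z\rangle(t)=\int_0^t\vartheta^2(s)\,ds$. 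By hypothesis $(ii)$ the process $\vartheta$ meets one of the assumptions of Lemma \ref{lem1}, so that lemma makes $Z$ H\"older of order $\tfrac12$, $\tfrac{\delta}{4+2\delta}$ or $\tfrac{\delta}{8+2\delta}$ and $\langle Z\rangle$ H\"older of the strictly larger order $1$, $\tfrac{\delta}{2+\delta}$ or $\tfrac{\delta}{4+\delta}$, respectively. The exponent $Z-\tfrac12\langle Z\rangle$ is therefore H\"older of the smaller of the two orders, which in each case coincides with the order of $Z$.

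Next I would carry this regularity through the two nonlinear maps. Each trajectory of $Z-\tfrac12\langle Z\rangle$ is continuous on $[0,T]$, hence bounded, and $\exp$ is Lipschitz on compacts, so $\varphi$ is H\"older of the same order as its exponent; since by $(i)$ the map $I$ is H\"older of order $\lambda$, the composition $U(t):=I(c\varphi_t)$ is H\"older of order $\lambda$ times the order of $\varphi$, i.e.\ $\tfrac\lambda2$, $\tfrac{\lambda\delta}{4+2\delta}$ or $\tfrac{\lambda\delta}{8+2\delta}$. The process $U$ is adapted because each $\varphi_t$ is $\F_t^W$-measurable, and $U(T)=I(c\varphi(T))=X^*$. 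Hypothesis $(iv)$ is exactly the statement that this H\"older order exceeds $H_3$, i.e.\ that $U$ fulfils condition $(C)$; together with $(iii)$, all hypotheses of Theorem \ref{thm1} hold.

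Applying Theorem \ref{thm1} to $\xi=X^*=U(T)$ then produces an adapted $\overline\psi$ with $\|\overline\psi\|_{\alpha,T}<\infty$ for some $\alpha\in(1-H_2,\tfrac12)$ and $X^*=\int_0^T\overline\psi(s)\,dG(s)$ as a generalized Lebesgue--Stieltjes integral, which is \eqref{reprmain}. Membership $\overline\psi\in\mathcal{B}_w^G$ is immediate: the integral exists by construction and $\E(\varphi(T)\int_0^T\overline\psi(s)\,dG(s))=\E(\varphi(T)X^*)=\E(\varphi(T)I(c\varphi(T)))=w$ by $(v)$. For the optimality \eqref{maxim} I would note that any $\psi\in\mathcal{B}_w^G$ gives $X_\psi:=\int_0^T\psi(s)\,dG(s)$ with $\E_{\Prob^*}(X_\psi)=\E(\varphi(T)X_\psi)=w$, hence $X_\psi\in\mathcal{B}$, so $\{X_\psi:\psi\in\mathcal{B}_w^G\}\subseteq\mathcal{B}$. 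Theorem \ref{Theorem main for max} gives $\E(u(X^*))=\max_{X\in\mathcal{B}}\E(u(X))$; since $X^*=X_{\overline\psi}$ already lies in the smaller family, the two suprema agree and the maximum over $\mathcal{B}_w^G$ is attained at $\overline\psi$.

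The hard part will be the regularity bookkeeping of the first two paragraphs: one must verify that taking the minimum of the exponents coming from $Z$ and from $\langle Z\rangle$ really reproduces the numbers in $(iv)$, and that the single exponent surviving the passage through the stochastic exponential and through $I$ is still large enough to clear the critical value $H_3$ demanded by condition $(C)$. A secondary point to confirm is that the a.s.\ identities (positivity of $\varphi(T)$ and the budget identity) and the $\Prob^*$-integrability of $\int_0^T\overline\psi\,dG$ are legitimate given $\Prob^*\sim\Prob$, so that the inclusion $\{X_\psi\}\subseteq\mathcal{B}$ is sound.
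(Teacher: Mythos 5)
Your proposal is correct and follows essentially the same route as the paper: establish that $U(t)=I(c\varphi_t)$ is an adapted H\"older process whose order (computed via Lemma \ref{lem1} and the composition with $\exp$ and $I$) exceeds $H_3$ by hypothesis $(iv)$, apply Theorem \ref{thm1} to get \eqref{reprmain}, and deduce \eqref{maxim} from Theorem \ref{Theorem main for max} via the inclusion of the attainable payoffs in $\mathcal{B}$ (the paper phrases this last step as a proof by contradiction, but it is the same argument). Your explicit bookkeeping of the H\"older exponents of $Z$ and $\langle Z\rangle$ and of their minimum is a welcome elaboration of what the paper only asserts.
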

\begin{proof}
Conditions $(i)$, $(iii)$ and $(iv)$ of Theorem \ref{TheoremIntRepresentation} together with Lemma \ref{lem1} supply that for any $c \in \mathbb{R}$ the random variable $\xi= I(c \varphi(T))$   is the final value of a H\"{o}lder process
$$
U(t)= I(c \varphi(t))  = I\left(c \exp\left\{\int_0^t \vartheta(s) d W(s) - \frac{1}{2} \int_0^t \vartheta^2(s) ds\right\}\right),
$$
and the H\"{o}lder index, being $\frac{\lambda}{2}$, $\frac{\lambda \delta}{4 + 2 \delta}$, or $\frac{\lambda \delta}{8 + 2 \delta}$, exceeds $H_3$. It means that condition $(C)$ of Theorem \ref{thm1} holds. Other conditions of Theorem \ref{thm1} are supplied by condition $(ii)$. Therefore, representation (\ref{reprmain}) follows from Theorem \ref{thm1}. Now, assume that (\ref{maxim}) is not valid, and there exists
$\psi_0 \in \mathcal{B}_w^G$ such that $\E\left(\varphi(T)\int_0^T \psi_0(s) d G(s)\right)=w$, and $\E u \left( \int_0^T \psi_0(s) d G(s) \right)>\E u(X^*)$. But in this case $\int_0^T \psi_0(s) d G(s)$ belongs to $\mathcal{B}$, and we get a contradiction with Theorem \ref{Theorem main for max}.
\end{proof}

\begin{example}
Let $u(x) = 1 - e^{- \beta x}$ be an exponential utility function with constant absolute risk aversion $\beta>0$. In this case
$I(x) = - \frac{1}{\beta} \log ( \frac{x}{\beta})$. Assume that
$$\varphi(T) = \exp \left\{ \int_0^T \vartheta(s) dW(s) - \frac{1}{2} \int_0^T\vartheta^2(s) ds \right\}$$ is chosen in such a way that
\begin{equation}\begin{gathered}
\label{eq1ex41}
\E \left( \varphi(T) |\log \varphi(T)|\right)\\ =\E \bigg( \exp\left\{ \int_0^T \vartheta(s) dW(s)  - \frac{1}{2} \int_0^T\vartheta^2(s) ds \right\}\\ \times
\left|\int_0^T \vartheta(s) dW(s) - \frac{1}{2} \int_0^T\vartheta^2(s) ds\right|
\bigg)<\infty.
\end{gathered}\end{equation}
Then, according to Example 3.35 from \cite{Foll-Sch}, the optimal profile can be written as
\begin{equation}
\label{ex41optprofile}
X^* = - \frac{1}{\beta} \left( \int_0^T \vartheta(s) dW(s) - \frac{1}{2} \int_0^T\vartheta^2(s) ds \right) + w + \frac{1}{\beta} H(\Prob^*|\Prob),
\end{equation}
where $H(\Prob^*|\Prob) = \E \left(\varphi(T) \log \varphi(T)\right)$, condition (\ref{eq1ex41}) supplies that $H(\Prob^*|\Prob)$ exists, and the maximal value of the expected utility is
$$
\E (u(X^*)) = 1 - \exp\left\{-\beta w - H(\Prob^*|\Prob) \right\}.
$$
Let $\varphi(T)$ be chosen in such a way that the corresponding process $\vartheta$ satisfies one of the conditions $(i)$--$(iii)$ of Lemma \ref{lem1}. Also, let $W$-transformable process $G$ satisfy conditions $(A)$ and $(B)$ of Theorem \ref{Follmer-Sch}, and $H_3<\frac{1}{2}$ in the case when condition $(i)$ holds, $H_3 <\frac{\delta}{4 + 2 \delta}$ in the case when condition $(ii)$ holds, and $H_3 < \frac{\delta}{8 + 2 \delta}$ in the case when condition $(iii)$ holds.
 Then  we can conclude directly from representation (\ref{ex41optprofile}) that conditions of Theorem \ref{Follmer-Sch} hold. Therefore, the optimal profile $X^*$  admits the representation  $X^* =   \int_0^T \psi (s) d G(s).$
\end{example}
 \begin{remark} Similarly, under the same conditions as above, we can conclude  that  for any constant $d\in \mathbb{R}$ there exists $\psi_d$ such that $X^* = d + \int_0^T \psi_d(s) d G(s).$ Therefore, we can start from any initial value of the capital and achieve the desirable wealth. In this sense, $w$ is not necessarily the initial wealth as it is often assumed in the semimartingale framework, but  is rather a budget constraint in the generalized sense.
 \end{remark}
 \begin{remark}
 In the case when $W$-transformable Gaussian process $G$ is a semimartingale (one of the simplest possibilities  is presented in Example \ref{ex1}), we can use Girsanov's theorem in order to get the representation, similar to \eqref{reprmain}. Indeed,
  let, for example, $G$ be a Gaussian process of the form $G(t)=\int_0^t\mu(s)ds +\int\limits_0^t a(s)dW(s)$, $|\mu(s)|\leq \mu$, $a(s)>a>0$ are non-random measurable functions, and $\xi$ is $\F_T^W$-measurable random variable, $\E(\xi^2)<\infty$. Then   we transform $G$ into $\widetilde{G}=\int\limits_0^\cdot a(s)d\widetilde{W}(s)$, with the help of equivalent probability measure $\widetilde{\Prob}$ having Radon--Nikodym derivative $$\frac{d\widetilde{\Prob}}{d\Prob}=\exp\left\{-\int_0^T\frac{\mu(s)}{a(s)}dW(s)-\frac{1}{2}
\int_0^T\left(\frac{\mu(s)}{a(s)}\right)^2d s \right\}.$$ With respect to this measure $\E_{\widetilde{\Prob}}|X^*|<\infty$, and we get the following  representation
\begin{gather}\label{repraux}
X^*=\E_{\widetilde{\Prob}}(X^*)+\int\limits_0^T\psi(s)d\widetilde{W}_s=\E_{\widetilde{\Prob}}(X^*)+\int\limits_0^T
\frac{\psi(s)}{a(s)}d\widetilde{G}(s)\\=\E_{\widetilde{\Prob}}(X^*)+\int\limits_0^T
\frac{\psi(s)}{a(s)}d{G}(s)
=\E_{\widetilde{\Prob}}(X^*)+\int\limits_0^T\psi(s) {\mu(s)}  ds+\int\limits_0^T
 {\psi(s)} dW(s).
\end{gather}
Representations \eqref{reprmain} and \eqref{repraux} have the following distinction: \eqref{reprmain} ``starts'' from $0$ (but can start from any other constant) while \eqref{repraux} ``starts'' exactly from $\E_{\widetilde{\Prob}}(X^*)$.
\end{remark}
\begin{remark}
As we can see, the solution of the utility maximization problem for $W$--transformable process depends on the process in  indirect way, through the random variable $\varphi(T)$ such that $\E\varphi(T)=1$, $\varphi(T)>0$ a.s.. Also, this solution depends on whether or not we can choose the appropriate value of $c$, but this is more or less a technical issue. Let us return to the choice of $\varphi(T)$. In the case of the semimartingale market, $\varphi(T)$ can be reasonably chosen as the likelihood ratio of some martingale measure, and the choice is unique in the case of the complete market. The non-semimartingale market can contain some hidden semimartingale structure. To illustrate this, consider two examples.
\end{remark}
\begin{example}\label{ex4.2}
Let the market consist of bond $B$ and stock $S$, $$B(t)=e^{rt},\;S(t)=\exp\left\{\mu t +\sigma B_t^{H}\right\},$$ $r\geq0$, $\mu\in\R$, $\sigma>0$, $H>\frac{1}{2}$. The discounted price process has a form $\vartheta(t)=\exp\left\{(\mu-r)t+\sigma B_t^H\right\}$. It is well-known that such market admits an arbitrage, but even in these circumstances the utility maximization problem makes sense.
Well, how to choose $\varphi(T)$? There are at least two natural approaches.
\begin{itemize}
 \item[{1.}] Note  that for $H>\frac{1}{2}$ the kernel $K_t^H$ from (\ref{fbmviawin}) has a form
$$
\left(K_t^H \mathbb{I}_{[0,t]} \right)(s)= C(H) s^{\frac{1}{2}- H}\int_s^t u^{H-\frac{1}{2}}(u-s)^{H-\frac{3}{2}} du,
$$
and representation (\ref{winviafbm}) has a form
$$
W(t) = \left(C(H)\right)^{-1}\int_0^t s^{\frac{1}{2}-H} K^*(t,s) d B_s^H,
$$
where
\begin{equation*}\begin{gathered}
K^*(t,s) = \Big(t^{H-\frac{1}{2}}(t-s)^{\frac{1}{2} - H} - \left(H-\frac{1}{2}\right)\int_s^t u^{H-\frac{3}{2}}(u\\-s)^{\frac{1}{2}-H}du\Big)\frac{1}{\Gamma\left(\frac{3}{2} - H\right)}.
\end{gathered}\end{equation*}

Therefore,
\begin{eqnarray*}
& &\left(C(H)\right)^{-1}\int_0^t s^{\frac{1}{2}-H} K^*(t,s) d \left( (\mu - r) s + \sigma B_s^H\right)\\
&=& \sigma W(t) + \frac{\mu-r}{C(H)} \int_0^t s^{\frac{1}{2}-H} K^*(t,s) ds\\
&=& \sigma W(t) + \frac{\mu-r}{C(H)\Gamma\left(\frac{3}{2} - H\right)} \int_0^t\left( s^{\frac{1}{2}-H} t^{H- \frac{1}{2}} (t-s)^{\frac{1}{2}-H}\right.\\
& &{}- \left.\left(H-\frac{1}{2}\right)s^{\frac{1}{2}-H} \int_s^t u^{H-\frac{3}{2}}(u-s)^{\frac{1}{2}-H}du \right)ds\\
&=&\sigma W_t + \frac{\mu -r}{C(H) \Gamma(\frac{3}{2}-H)}\frac{\Gamma^2(\frac{3}{2}-H)}{(\frac{3}{2}-H)\Gamma (2 - 2 H)} t^{\frac{3}{2}-H}\\
&=& \sigma W_t + (\mu-r) C_1(H) t^{\frac{3}{2}-H},
\end{eqnarray*}
where
$$
C_1(H) = \left(\frac{3}{2}-H\right)^{-1} \left(\frac{\Gamma(\frac{3}{2} - H)}{2H\Gamma(2 - 2H)\Gamma(H+\frac{1}{2})} \right)^\frac{1}{2}.
$$
In this sense we say that the model involves a hidden semimartingale structure.\\
Consider a virtual semimartingale asset
\begin{align*}
\hat{Y}(t) &= \exp \left\{ (C(H))^{-1} \int_0^t s^{\frac{1}{2}-H}K^*(t,s) d \log Y(s) \right\} \\
&= \exp\left\{\sigma W_t + (\mu-r) C(H)t^{\frac{3}{2}-H}\right\}.
\end{align*}
We see that measure $\Prob^*$ such that
\begin{equation}\begin{split}\label{ex42ChangeMeasure}
\frac{d\Prob^*}{d\Prob} &=\exp \left\{ - \int_0^T \left(\frac{(\mu-r)c_2(H)}{\sigma} s^{\frac{1}{2}-H}+ \frac{\sigma}{2}\right)dW_s\right. \\
&  \quad \left.- \frac{1}{2} \int_0^T \left(\frac{(\mu-r)c_2(H)}{\sigma} s^{\frac{1}{2}-H}+ \frac{\sigma}{2}\right)^2 ds\right\},
\end{split}\end{equation}
where $C_2(H) = C_1(H) \left(\frac{3}{2}-H\right),$ reduces $\hat{Y}(t)$ to the martingale of the form $\exp \left\{\sigma W_t - \frac{\sigma^2}{2} t\right\}$.
Therefore, we can put $\varphi(T) = \frac{d\Prob^*}{d\Prob}$ from (\ref{ex42ChangeMeasure}). Regarding the H\"{o}lder property, $\vartheta(s)= s^{\frac{1}{2}-H}$ satisfies condition $(iii)$ of Lemma \ref{lem1} for $H<\frac{3}{4}$. Therefore, for $H<\frac{3}{4}$ and for utility function $u(x) = 1 - e^{-\alpha x}$ we have
$$
X^* = \frac{1}{\alpha} \left(\int_0^T \varsigma(s) dW_s - \frac{1}{2} \int_0^T \varsigma_s^2 ds \right) + W + \frac{1}{2} H(\Prob^*|\Prob),
$$
where $\varsigma(s) = \frac{(\mu-r)C_2(H)}{\sigma} s^{\frac{1}{2}-H}+ \frac{\sigma}{2}$, and $|H(\Prob^*|\Prob)|<\infty.$
\item[{2.}] It was proved in \cite{Dung} that the fractional Brownian motion $B^H$ is the limit in $L_p(\Omega,\mathcal{F},\Prob)$ for any $p>0$ of the process
$$B^{H,\epsilon}(t)=\int\limits_0^tK(s+\epsilon,s)dW(s)+\int\limits_0^t\psi_\epsilon (s)ds,$$
where $W$ is he underlying Wiener process, i.e. $B^{H}(t)=\int\limits_0^tK(t,s)dW(s),$ where
\begin{gather*}
K(t,s)=C_H s^{\frac{1}{2}-H}\int\limits_s^t u^{H-\frac{1}{2}}(u-s)^{H-\frac{3}{2}}du,\\
\psi_\epsilon(s)=\int\limits_0^s\partial_1K(s+\epsilon,u)dW_u,\\
\partial_1K(t,s)=\frac{\partial K(t,s)}{\partial t}=C_Hs^{\frac{1}{2}-H}t^{H-\frac{1}{2}}(t-s)^{H-\frac{3}{2}}.
\end{gather*}
Consider prelimit market with discounted risky asset price $Y^{\epsilon}$ of the form
$$Y^{\epsilon}(t)=\exp{\left\{(\mu-r)t+\sigma\int\limits_{0}^{t}\psi_\epsilon(s)ds+\sigma\int\limits_0^tK(s+\epsilon,s)dW_s\right\}}.$$
This financial market is arbitrage-free and complete, and the unique martingale measure has the Radon-Nikodym derivative
$$\varphi_\epsilon(T)=\exp\left\{-\int\limits_0^T\zeta_\epsilon(t)dW_t-\frac{1}{2}\int\limits_0^T\zeta^2_\epsilon(t)dt\right\},$$
where
$$\zeta_\epsilon(t)=\frac{\mu-r+\sigma\psi_\epsilon(t)}{\sigma K(t+\epsilon,t)}+\frac{1}{2}\sigma K(t+\epsilon,t).$$
Note that $K(t+\epsilon,t)\rightarrow 0$ as $\epsilon\rightarrow0$. Furthermore, $\rho_t=\frac{\mu-r+\sigma\psi_\epsilon(t)}{\sigma K(t+\epsilon,t)}$ is a Gaussian process with $\E\rho_t=0$ and
\begin{gather*}
\var \zeta_\varepsilon(t)=\int\limits_{0}^{t}\left(\frac{\partial_1 K(t+\epsilon,u)}{ K(t+\epsilon,t)}\right)^2du\\
=\int\limits_{0}^{t}\left(\frac{u^{1/2-H}(t+\epsilon)^{H-1/2}(t+\epsilon-u)^{H-3/2}}{t^{1/2-H}\int\limits_t^{t+\epsilon}v^{H-1/2}(v-t)^{H-3/2}}\right)^2du\\
\ge\epsilon^{1-2H}\int\limits_0^t(t+\epsilon-u)^{2H-3}du=\frac{\epsilon^{1-2H}t}{2-2H}\left(\epsilon^{2H-2}-(t+\epsilon)^{2H-2}\right)\rightarrow\infty.
\end{gather*}
Therefore, we can not get a reasonable limit of $\varphi_\epsilon(T)$ as $\epsilon\rightarrow0.$ Thus one should use this approach  with great caution.
\end{itemize}
\end{example}

\section{Expected utility maximization for restricted capital profiles}\label{sec:5}
Consider now the case when the utility function $u$ is defined on some interval $(a,\infty)$. Assume for technical simplicity that $a=0$. Therefore, in this case case $\mathcal{B}_0$ of admissible capital profiles has a form
$$\mathcal{B}_0=\left\{X\in L^0(\Omega,\F,\Prob):X\ge0 \;\; \text{a.s. and} \;\; \E(\varphi(T) X)=w\right\}.$$
Assume that the utility function $u$ is continuously differentiable on $(0,\infty)$, introduce $\pi_1=\lim\limits_{x\uparrow\infty}u'(x)\ge0$, $\pi_2=u'(0+)=\lim\limits_{x\downarrow\infty}u'(x)\le +\infty$, and define $I^+:(\pi_1,\pi_2)\longrightarrow(0,\infty)$
as the continuous, bijective and strictly decreasing inverse functions of $u'$ on $(\pi_1,\pi_2)$.

Extend $I^+$ to the full half axis $\left[0,\infty\right]$ by setting
\begin{displaymath}
I^+(y)=\left\{ \begin{array}{ll}
+\infty,& y\le\pi_1\\
0,& y\ge\pi_2.
\end{array}\right.
\end{displaymath}

\begin{theorem}(Theorem 3.39 \cite{Foll-Sch})
Let the random variable $X^*\in\mathcal{B}_0$ have a form $X^*=I^+(c\varphi(T))$ for such constant $c>0$ that $\E( \varphi(T) I^{+}(c\varphi(T)))=w$. If $\E u(X^*)<\infty$ then
$$\E( u(X^*))=\max\limits_{X\in\mathcal{B}_0}\E (u(X)),$$
and this maximizer is unique.
\end{theorem}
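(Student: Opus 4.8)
The plan is to run the standard variational argument for first-order characterizations: derive a pointwise supporting-line inequality from the concavity of $u$, apply it with $y=c\varphi(T)$ so that the maximizer $X^*=I^+(c\varphi(T))$ becomes the contact point, and then integrate against the common budget constraint so that the linear term cancels.

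First I would record the supporting inequality. Since $u$ is concave and differentiable on $(0,\infty)$ with $u'=(I^+)^{-1}$ on $(\pi_1,\pi_2)$, the graph of $u$ lies below each of its tangents, so for every $x>0$ and every $y\in(\pi_1,\pi_2)$
$$u(x)\le u(I^+(y))+y\,(x-I^+(y)).$$
Applying this with $y=c\varphi(T)$ yields, pointwise on the event $\{\pi_1<c\varphi(T)<\pi_2\}$,
$$u(X)\le u(X^*)+c\varphi(T)\,(X-X^*)$$
for any admissible $X\ge 0$. Next I would dispose of the boundary cases arising from the extension of $I^+$ to $[0,\infty]$. On $\{c\varphi(T)\ge\pi_2\}$ we have $X^*=0$, and since $u'(0+)=\pi_2\le c\varphi(T)$ concavity still gives $u(X)\le u(0)+\pi_2 X\le u(X^*)+c\varphi(T)(X-X^*)$ because $X\ge 0$; the event $\{c\varphi(T)\le\pi_1\}$ would force $X^*=+\infty$, which is incompatible with $X^*\in\mathcal{B}_0$ since it would make $\E(\varphi(T)X^*)=+\infty\ne w$, so that event is null and may be discarded. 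Hence the displayed inequality holds almost surely.

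The final step is to integrate. Taking expectations and using that both $X$ and $X^*$ lie in $\mathcal{B}_0$, so that $\E(\varphi(T)X)=\E(\varphi(T)X^*)=w$, the linear contribution vanishes:
$$\E(u(X))\le \E(u(X^*))+c\big(\E(\varphi(T)X)-\E(\varphi(T)X^*)\big)=\E(u(X^*)),$$
which is exactly the claimed optimality. For uniqueness I would invoke strict concavity: the supporting inequality is strict on the set $\{X\ne X^*\}$, so equality $\E(u(X))=\E(u(X^*))$ can hold only if $X=X^*$ almost surely.

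The main obstacle I expect is not the algebra but the bookkeeping of integrability together with the boundary values of the extended $I^+$. Concretely, one must verify that the cross term $c\,\E(\varphi(T)(X-X^*))$ is genuinely well-defined and equals zero (here the finiteness $\E(\varphi(T)X)=\E(\varphi(T)X^*)=w<\infty$ does the work), and that the right-hand side of the supporting inequality has integrable positive part, so that $\E(u(X))$ is well-defined in $[-\infty,\infty)$ and the hypothesis $\E u(X^*)<\infty$ makes the comparison meaningful rather than vacuous.
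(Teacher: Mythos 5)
Your argument is correct and is precisely the standard one: the paper itself gives no proof of this statement, importing it verbatim from F\"{o}llmer--Schied (Theorem 3.39), and the proof there runs exactly as yours does --- the Fenchel-type supporting-line inequality $u(x)\le u(I^+(y))+y\,(x-I^+(y))$ evaluated at $y=c\varphi(T)$, cancellation of the linear term via the common budget constraint $\E(\varphi(T)X)=\E(\varphi(T)X^*)=w$, and strict concavity for uniqueness. Your treatment of the boundary events $\{c\varphi(T)\ge\pi_2\}$ and $\{c\varphi(T)\le\pi_1\}$ and of the integrability bookkeeping is also the right one, so there is nothing to add.
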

Let $u(x)=\frac{x^\gamma}{\gamma}$, $x>0$, $\gamma\in(0,1)$. Then, according to example 3.43 \cite{Foll-Sch},
$$I^+(c\varphi(T))=c^{-\frac{1}{1-\gamma}}(\varphi(T))^{-\frac{1}{1-\gamma}}.$$
If $d:=\E (\varphi(T))^{-\frac{\gamma}{1-\gamma}}<\infty$ then unique optimal profile is given by $X^*=\frac{w}{d}(\varphi(T))^{-\frac{1}{1-\gamma}}$, and the maximal value of the expected utility is equal to
$$\E( u(X^*))=\frac{1}{\gamma}w^{\gamma}d^{1-\gamma}.$$
As it was mentioned,
\begin{equation}\label{fi}
\varphi=\varphi(T)=\exp\left\{\int\limits_{0}^T\vartheta(s)dW(s)-\frac{1}{2}\int\limits_0^T \vartheta^2(s)ds,\right\}
\end{equation}
 thus $$(\varphi(T))^{-\frac{1}{1-\gamma}}=\exp\left\{-\frac{1}{1-\gamma}\int\limits_0^T \vartheta(s)dW(s)+\frac{1}{2(1-\gamma)}\int\limits_0^T \vartheta^2(s)ds\right\}.$$

Therefore, we get the following result.
\begin{theorem}
Let the process $\vartheta$ in the representation \eqref{fi} satisfy one of the conditions $(i)-(iii)$ of Lemma \ref{lem1}, and
$$\E\exp\left\{-\frac{\gamma}{1-\gamma}\int\limits_0^T\vartheta(s)dW_s+\frac{\gamma}{2(1-\gamma)}\int\limits_0^T\vartheta^2_sds\right\}<\infty.$$
Let the process $G$ satisfy the same conditions as in the Theorem \ref{TheoremIntRepresentation}. Then $X^*=\int\limits_0^T\psi(s)dG(s)$.

In the case when $u(x)=\log x$, we have $\gamma=0$ and $X^*=\frac{w}{\varphi(T)}$. Assuming that the relative entropy $H\left({\Prob}|{\Prob^*}\right)=\E(\frac{1}{\varphi(T)}\log \varphi(T))$ is a finite number, we get that
$$\E(\log X^*)=\log w + H\left({\Prob}|{\Prob^*}\right).$$
\end{theorem}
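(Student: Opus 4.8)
The strategy is to reduce this theorem to the two results already established immediately above it: the explicit form of the optimal profile $X^*$ derived from Theorem 3.43 of \cite{Foll-Sch}, and the integral representation machinery of Theorem \ref{TheoremIntRepresentation}. The key observation is that the power-utility case $u(x)=\frac{x^\gamma}{\gamma}$ produces an optimal profile $X^*=\frac{w}{d}(\varphi(T))^{-\frac{1}{1-\gamma}}$, which is precisely $I^+(c\varphi(T))$ for the appropriate constant $c$, and which has been rewritten in the display preceding the statement as an explicit exponential of an It\^o integral and its quadratic characteristic. So the entire content to verify is that this $X^*$ satisfies the hypotheses of Theorem \ref{TheoremIntRepresentation}, after which representation \eqref{reprmain} delivers $X^*=\int_0^T\psi(s)dG(s)$.

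First I would note that the finiteness assumption $\E\exp\{-\frac{\gamma}{1-\gamma}\int_0^T\vartheta(s)dW_s+\frac{\gamma}{2(1-\gamma)}\int_0^T\vartheta^2_sds\}<\infty$ is exactly the condition $d=\E(\varphi(T))^{-\frac{\gamma}{1-\gamma}}<\infty$ written out via \eqref{fi}; this guarantees that the normalizing constant $d$ is finite, hence $X^*=\frac{w}{d}(\varphi(T))^{-\frac{1}{1-\gamma}}$ is well-defined and that condition $(v)$ of Theorem \ref{TheoremIntRepresentation} holds with the corresponding $c$ solving $\E(\varphi(T)I^+(c\varphi(T)))=w$. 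Next I would identify $X^*$ as the terminal value of the H\"older process $U(t)=\frac{w}{d}(\varphi(t))^{-\frac{1}{1-\gamma}}$. Since $\varphi(t)$ is the positive exponential martingale built from $\vartheta$, and since $\vartheta$ satisfies one of conditions $(i)$--$(iii)$ of Lemma \ref{lem1}, the It\^o integral $\int_0^t\vartheta(s)dW(s)$ and its quadratic characteristic $\int_0^t\vartheta^2(s)ds$ are both H\"older of the orders specified there; composing with the smooth, locally Lipschitz map $x\mapsto\frac{w}{d}e^{-\frac{1}{1-\gamma}x}$ preserves the H\"older property on compact time intervals. This verifies hypotheses $(i)$ and $(ii)$ of Theorem \ref{TheoremIntRepresentation}, with the relevant H\"older exponent inherited from Lemma \ref{lem1}.

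With $G$ assumed to satisfy the same conditions as in Theorem \ref{TheoremIntRepresentation} (namely condition $(A)$ with $0<2H_1-1<H_2<H_1$ and condition $(B)$, plus the matching inequalities in $(iv)$ relating the H\"older index to $H_3$), all hypotheses of Theorem \ref{TheoremIntRepresentation} are in force, and its conclusion yields an adapted $\psi$ with $\|\psi\|_{\alpha,T}<\infty$ such that $X^*=\int_0^T\psi(s)dG(s)$ almost surely, which is the asserted representation. The logarithmic case $\gamma=0$ is handled separately but in parallel: here $X^*=\frac{w}{\varphi(T)}$ is the $\gamma\to0$ specialization, and writing $\log X^*=\log w-\int_0^T\vartheta(s)dW(s)+\frac12\int_0^T\vartheta^2(s)ds$, taking expectations under $\Prob$ and recognizing $\E(\frac{1}{\varphi(T)}\log\varphi(T))=H(\Prob|\Prob^*)$ gives $\E(\log X^*)=\log w+H(\Prob|\Prob^*)$, valid precisely when that relative entropy is finite.

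The main obstacle I anticipate is the verification step for the H\"older regularity of $U(t)$, specifically confirming that the exponential map does not destroy the H\"older order obtained from Lemma \ref{lem1}. The exponent $-\frac{1}{1-\gamma}$ can be large when $\gamma$ is close to $1$, so the composition $x\mapsto e^{-\frac{1}{1-\gamma}x}$ is only locally Lipschitz, and one must argue that the trajectories of $\int_0^t\vartheta(s)dW(s)-\frac12\int_0^t\vartheta^2(s)ds$ remain in a compact set almost surely on $[0,T]$ so that the H\"older constant can be controlled pathwise; this is where the finiteness hypothesis and the a.s.\ continuity of the exponent do the essential work. Once this is secured, matching the resulting H\"older order against the threshold $H_3$ through condition $(iv)$ is routine, and everything else is a direct citation of the preceding theorems.
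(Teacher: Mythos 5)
Your proposal is correct and follows essentially the same route as the paper, which states this theorem without a separate proof because it is meant to follow directly from the preceding computation of $X^*=\frac{w}{d}(\varphi(T))^{-\frac{1}{1-\gamma}}$ via F\"ollmer--Schied (Example 3.43), the exponential representation of $(\varphi(T))^{-\frac{1}{1-\gamma}}$, Lemma \ref{lem1}, and the representation machinery of Theorem \ref{TheoremIntRepresentation}. One small caution: condition $(i)$ of Theorem \ref{TheoremIntRepresentation} (global H\"older continuity of $I$) does not literally hold for $I^+(y)=y^{-\frac{1}{1-\gamma}}$ near $y=0$, so your direct verification that $U(t)=\frac{w}{d}(\varphi(t))^{-\frac{1}{1-\gamma}}$ is H\"older on $[0,T]$ --- using that the trajectories of the exponent are continuous, hence bounded, so the locally Lipschitz exponential map preserves the H\"older order from Lemma \ref{lem1} --- followed by an appeal to Theorem \ref{thm1}, is exactly the right way to phrase the argument, as you yourself anticipated in your final paragraph.
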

\section{Construction of the   strategy supplying the integral representation}\label{sec:6}
Consider the procedure of constructing the strategy  $\psi$, which supplies the representation \eqref{reprez}. This construction is described in the paper \cite{mish-shev}. Note, that such strategy can be non-unique. Let  $\{t_n,n\ge 1\}\in(0,1)$ be some sequence of points such that $t_n\uparrow 1$, $n\to\infty$, and $\alpha\in(1-H_2, \frac12).$   We    construct an adapted process $\psi$ such that
\begin{itemize}
\item[$(\Phi1)$] For all $n$ large enough $\int_0^{t_n}\psi_s dX_s = Z_{t_{n-1}}$.
\item[$(\Phi2)$]  $\|\psi\|_{\alpha,[t_n,1]}\to 0$, $n\to\infty$.
\end{itemize}
Since $Z_{t_n}\to Z_1$, $n\to\infty$, by continuity, these properties imply  \eqref{reprez}.
Denote for $n\ge 1$ \ $\xi_n = Z_{t_{n}}$, $\Delta_n = t_{n+1}-t_n$, $\delta_n = |\xi_{n}-\xi_{n-1}|$.

The process $\psi$ is constructed inductively on $[t_n,t_{n+1}]$. Some positive sequences $\{\sigma_n,n\ge 1\}$ and $\{\nu_n,n\ge 1\}$ such that $\sigma_n\to\infty$, $n\to\infty$ are taken.

Construction is started setting $\psi_t = 0$ for $t\in[0,t_1]$.
Further, assuming that $\psi$ is constructed on $[0,t_n)$ and denoting $V_t = \int_0^t \psi_s dX_s$, the construction is continued  depending on whether some event $A_n\in \mathcal F_{t_n}$, which will be specified later, or its complement $B_n=\Omega\setminus A_n$ holds.

Case 1:  $\omega \in A_n$. Then it was proved in \cite{mish-shev} that there exists a process $\{\phi_t,t\in [t_n,t_{n+1}]\}$ such that $\int_{t_n}^t \phi_s dX_s \to +\infty$, $t\to t_{n+1}-$.
Define $v_n = V_{t_n}-\xi_{n}$, $$\tau_n = \inf\{t\ge t_n: \int_{t_n}^t \phi_s dX_s \ge |v_n|\}$$
and set
$$
\psi_s = \phi_s  {\sign}(v_n)\mathbb{I}_{[t_n,\tau_n]}(t),\  t\in[t_n,t_{n+1}].
$$

Case 2: $\omega\in B_n$. Define $g_n(x) = \sqrt{x^2 + \nu_n^2} - \nu_n$ so that $g_n\in C^\infty(\mathbb{R})$, $|x|\ge g_n(x)\ge (|x|-\nu_{n})\vee 0$. Introduce the stopping time
$$
\tau_n = \inf\{t\ge t_n: \sigma_n g_n(X_{t}-X_{t_n})\ge \delta_n\}\wedge t_{n+1}
$$
and set
$$
\psi_s = \sigma_n g'_n(X_t-X_{t_n})  {\sign}(\xi_n-\xi_{n-1})\mathbb{I}_{[t_n,\tau_n]}(t), t\in[t_n,t_{n+1}].
$$
It is established in \cite{mish-shev} that such strategy ensures representation \eqref{reprez}.




\end{document}